\def\BibTeX{{\rm B\kern-.05em{\sc i\kern-.025em b}\kern-.08em
		T\kern-.1667em\lower.7ex\hbox{E}\kern-.125emX}}
\newtheorem{theorem}{Theorem}
\newtheorem{lemma}{Lemma}
\newtheorem{corollary}{Corollary}
\newtheorem{remark}{Remark}
\newtheorem{defn}{Definition}
\newtheorem{assume}{Assumption}
\newcommand{\algrulehor}[1][.2pt]{\par\vskip.5\baselineskip\hrule height #1\par\vskip.5\baselineskip}
\begin{document}

\title{\LARGE \bf
A Sensitivity-based Data Augmentation Framework for Model Predictive Control Policy Approximation
}

\author{Dinesh Krishnamoorthy \IEEEmembership{Member, IEEE}% <-this % stops a space
	\thanks{*This work was  supported by the Research Council of Norway, under the IKTPLUSS program (Project number 299585)}% <-this % stops a space
	\thanks{Dinesh Krishnamoorthy is currently with the Harvard John A. Paulson School of Engineering and Applied Sciences, Harvard University, Cambridge, MA, 02138.
		(email: {\tt\small dkrishnamoorthy@seas.harvard.edu})}%
}

\maketitle

\begin{abstract}
	Approximating model predictive control (MPC) policy using expert-based supervised learning techniques  requires labeled training data sets sampled from the MPC policy. This is typically obtained by sampling the feasible state-space and evaluating the control law by solving the numerical optimization problem offline for each sample. Although the resulting approximate policy can be cheaply evaluated online, generating large training samples to learn the MPC  policy can be time consuming and prohibitively expensive. This is one of the fundamental bottlenecks that limit the design and implementation of  MPC policy approximation. This technical note aims to address this challenge, and proposes a novel sensitivity-based data augmentation scheme for direct policy approximation. The proposed approach is based on exploiting the parametric sensitivities to cheaply generate additional training samples in the neighborhood of the existing samples. 
\end{abstract}

%%%%%%%%%%%%%%%%%%%%%%%%%%%%%%%%%%%%%%%%%%%%%%%%%%%%%%%%%%%%%%%%%%%%%%%%%%%%%%%%
\section{INTRODUCTION}
Model predictive control (MPC) is a popular control strategy  for constrained multivariable systems that is based on repeatedly solving a receding horizon optimal control problem at each sampling time of the controller.  As the range of MPC application extends beyond the traditional process industries, additional challenges such as computational effort and memory footprint need to be addressed. One approach to eliminate the need for solving optimization problems online, is to pre-compute the MPC policy $ u^* = \pi(x)$ as a function of the states $ x$. 
This idea was first proposed under the context of \emph{explicit MPC} for constrained linear quadratic systems where the MPC feedback law is expressed as a piecewise-affine function defined on polytopes \cite{bemporad2002explicit,tondel2003algorithm}. However, this can quickly become computationally intractable for large systems, since the number of polytopic regions grow exponentially with the number of decision variables and constraints. The extension to nonlinear systems (with economic objective terms) is also not straightforward.

An alternative approach is to use some parametric function approximator, such as artificial neural networks (ANN) to approximate the MPC policy. Although this idea dates back to the mid 90s \cite{parisini1995receding}, the use of parametric functions to approximate the MPC policy remained more or less dormant until very recently. 
Motivated by the recent developments and promises of deep learning techniques, there has been an unprecedented surge of interest in the past couple of years in approximating the MPC policy using deep neural networks. This interest has resulted in a number of research works from several research groups published just in the past couple of years. 
See for example \cite{chakrabarty2017support,karg2018efficient,chen2018approximating,csekHo2015explicit,paulson2020approximate,hertneck2018learning,drgovna2018approximate,zhang2019safe} to name a few.

The underlying  framework adopted in these works is as follows. The feasible state-space is sampled offline to generate a finite number of $ N_{s} $ discrete states $ \{ x_{i} \}_{i=1}^{N_{s}} $. The NMPC problem is solved offline for each discrete state as the initial condition to obtain the corresponding optimal control law $ u_{i}^*  = \pi_{\textup{mpc}}(x_{i})$ for all $ i = 1,\dots,N_{s} $. The resulting MPC policy $ \pi_{\textup{mpc}}(\cdot)  $ is approximated using any suitable parametric function approximator with $ \{(x_{i},u_{i}^*)\}_{i=1}^{N_{s}} $ as the labeled training data set, such that the trained policy $ \pi_{\textup{approx}}(\cdot) $ can be used online to cheaply evaluate the optimal control input. This approach is also studied more broadly under the context of \textit{direct policy approximation}, and is also known as {expert-based supervised learning} \cite{bertsekas2019reinforcement}, behavioral cloning \cite{esmaili1995behavioural}, or imitation learning \cite{osa2018algorithmicperspectiveImitationLearning,novak2020supervised}, where in our case the MPC policy is the \enquote{expert} that we would like to imitate as faithfully as possible.

One of the main bottlenecks of this approach is that, generating the training data set (given by the \enquote{expert}) can be time consuming and prohibitively expensive. The availability of large training data set covering the entire feasible state space is a key stipulation in using deep learning techniques and has a major impact on  the accuracy of the approximate policy.  This implies that the sample size $ N_{s} $ must be sufficiently large, covering the entire feasible state space. In the case of MPC policy approximation, one  then typically has to solve a large number of nonlinear programming (NLP) problems offline in order to generate adequate training samples. This challenge is only amplified for higher dimensional systems, since the number of samples $ N_{s} $ required to adequately cover the feasible state-space increases exponentially with the number of states. 
For example, the authors in \cite{hertneck2018learning} reported a computation time of roughly 500 hours on a Quad-Core PC\footnote{without parallelization of the sampling and validation} to learn the approximate MPC control law for the case study considered in their work.  Other works also report the need for a large training data set  to adequately approximate the MPC policy.  

In the field of machine learning and deep neural networks, the problem of insufficient training data samples is typically addressed using a process known as \enquote{data augmentation}, which is \emph{ a strategy to artificially increase the number of training samples using computationally inexpensive transformations of the existing samples} \cite{tran2017bayesian,taylor2018improving,shorten2019survey}. This has been extensively studied in the context of deep learning for image classification problems, where geometric transformations (such as translation, rotation, cropping etc.) and photometric transformations (such as color, saturation, contrast, brightness etc.) are often used to augment the existing data set with artificially generated training samples. Unfortunately, such data augmentation techniques are not relevant when the training data set is obtained by sampling some expert policy, and hence  are not applicable in the context of control policy approximation. 

This technical note aims to address the key issue of generating the training data samples by exploiting the NLP sensitivities to generate multiple training samples using the solution of a single optimization problem solved offline. That is, the MPC problem solved offline  can be seen as a parametric optimization problem parameterized with respect to the initial state $ x_{i} $.  The NLP sensitivity then  tells us how the optimal input $ u^*_{i} $ changes for perturbations $ \Delta x $ in the neighborhood of $ x_{i} $. 
Therefore, using  the solution to one parametric optimization problem solved for $ x_{i} $, we can cheaply generate multiple training data samples for other state realizations  in the neighborhood of $ x_{i} $ using the NLP sensitivity (also known as tangential predictor). This only requires computing the solution to a system of linear equations, which is much cheaper to evaluate than solving a nonlinear programming problem. To this end, the aim of this technical note is not to present a new MPC approximation algorithm, but rather address the  pivotal issue of generating training data samples, that would facilitate  efficient design and  implementation of the  approximate  MPC framework. 

\paragraph*{Main contribution} The main contribution of this technical note is a sensitivity-based data augmentation framework to efficiently and cheaply generate training data samples that can be used to approximate the MPC policy. 
More broadly, the proposed scheme is not just restricted to data augmentation for approximating an MPC policy, but can also be used in cases where the training data set is generated by solving an NLP problem e.g.  in inverse optimal control (where the goal is to impute the objective from data sampled from an optimal policy \cite{kalman1964InverseOCP,keshavarz2011imputing}), approximate moving horizon estimation \cite{karg2021approximateMHE},  steady-state real-time optimization \cite{DK2019FOPAM} etc. Thus, a wider contribution of this technical note is a  novel data augmentation framework for approximate optimal control problems, where the training data comprises of optimal state-action pairs, generated by sampling some  policy given by a nonlinear programming problem. 

The remainder of the paper is organized as follows. Section~\ref{sec:preliminaries} formulates the problem and recalls the MPC policy approximation framework. The sensitivity-based data augmentation technique to efficiently generate the training samples is presented in Section~\ref{sec:Proposed}, where we also provide an upper bound on the approximation error stemming from augmenting the data set with inexact samples. The proposed approach is illustrated using two different examples in Section~\ref{sec:Example} before concluding the paper in Section~\ref{sec:Conclude}.

\section{PRELIMINARIES} \label{sec:preliminaries}
\subsection{Problem Formulation}
Consider a discrete-time nonlinear system,
\begin{equation}\label{Eq:System}
x(t+1)=f\left(x(t),u(t) \right)
\end{equation}
where $ x(t) \in \mathbb{R}^{n_{x}} $ and $ u(t) \in \mathbb{R}^{n_{u}} $ are the states and  control inputs at time $ t $, respectively. The mapping $ f:\mathbb{R}^{n_{x}}\times\mathbb{R}^{n_{u}} \rightarrow \mathbb{R}^{n_{x}} $ denotes the discrete time plant model.
The MPC problem $ \mathcal{P}(x(t)) $ is formulated as
\begin{subequations}\label{Eq:MPC}
\begin{align}
 V_{N}(x(t)) =&\min_{x(\cdot|t),u(\cdot|t)}  \; \sum_{k=0}^{N-1} \ell(x(k|t)u(k|t)) + \ell_f(x(N|t))\label{Eq:MPC:cost}  \\
\textup{s.t.} \; & x({k+1}|t) = f(x(k|t),u(k|t)) \quad\forall k \in \mathbb{I}_{0:N-1}\label{Eq:MPC:system}\\
& x(k|t) \in \mathcal{X}, \quad u(k|t) \in \mathcal{U} \qquad\quad\forall k \in \mathbb{I}_{0:N-1}\label{Eq:MPC:path}\\
& x({N|t}) \in \mathcal{X}_{f} \label{Eq:MPC:terminal}\\
& x({0|t}) = x(t) \label{Eq:MPC:init}
\end{align}
\end{subequations}
where $ \ell: \mathbb{R}^{n_{x}} \times \mathbb{R}^{n_{u}} \rightarrow \mathbb{R}$ denotes the stage cost, which may be either a tracking or economic objective,  $ \ell_f : \mathbb{R}^{n_{x}} \rightarrow \mathbb{R}$ denotes the terminal cost, $ N $ is the length of the prediction horizon,  \eqref{Eq:MPC:path} denotes the state and input constraints, \eqref{Eq:MPC:terminal} denotes the terminal constraint, and \eqref{Eq:MPC:init} denotes the initial condition constraint. In the traditional MPC paradigm, the optimization problem $ \mathcal{P}(x(t)) $ is solved at each sample time $ t $ using $ x(t) $ as the state feedback, and the optimal input $  u^*(t) = u^*(0|t) $ is injected into the plant in a receding horizon fashion. This implicitly leads to the control policy $ \pi_{\textup{mpc}}: \mathbb{R}^{n_{x}} \rightarrow\mathbb{R}^{n_{u}} $.
\begin{equation}\label{Eq:MPClaw}
u^*(t) = \pi_{\textup{mpc}}(x(t)) 
\end{equation}

\subsection{MPC policy approximation}
This subsection recalls the underlying idea of the  MPC policy approximation framework common to works such as \cite{parisini1995receding,karg2018efficient,paulson2020approximate} and \cite{hertneck2018learning}. To approximate the MPC control law \eqref{Eq:MPClaw}, the feasible state space $ \mathcal{X}_{feas} $ is sampled to generate $ N_{s} $ randomly chosen initial states $ \{x_{i}\}_{i=1}^{N_{s}}$. For each initial state $ x_{i} $, the MPC problem $ \mathcal{P}(x_{i}) $ is solved to obtain the corresponding optimal input $ u^*_{i}  = \pi_{\textup{mpc}}(x_{i})$.  Using the data samples $ \mathcal{D} := \{ (x_{i},u^*_{i})\}_{i=1}^{N_{s}} $, any desirable parametric function $ \pi_{\textup{approx}}(x;\theta) $ parameterized by the parameters $ \theta $ is trained that minimizes the mean squared error
\begin{equation}\label{Eq:MSE}
 {\theta}_{0} = \arg \min_{\theta} \frac{1}{N_{s}}\sum_{i=1}^{N_{s}} \| \pi_{\textup{approx}}(x_{i};\theta) - u^*_{i} \|^2 
\end{equation}
This is summarized in Algorithm~\ref{alg:Training}.  Once the parametric function is trained, the approximate control policy $ \pi_{\textup{approx}}(x;\theta_{0})  $ can be used online to cheaply evaluate the optimal control input. %, and the closed loop system under the approximate MPC control law is given by,
Although deep neural networks have become a popular choice for MPC policy approximation, several other function approximators have also been used in the literature. As such, the method proposed in the following section will not be limited to any one class  of parametric functions. 

\begin{algorithm}[t]
	\caption{Generating training samples and approximating the MPC policy.}
	\label{alg:Training}
	\begin{algorithmic}[1]
		\Require $ \mathcal{P}(x) $, $ \mathcal{X}_{feas} $, $ \mathcal{D} = \emptyset$
		\algrulehor
		\For {$ i = 1,\dots,N_{s} $}
		\State Sample $ x_{i} \in \mathcal{X}_{feas} $
		\State $ u_{i}^* \leftarrow $ Solve $ \mathcal{P}(x_{i}) $
		\State $\mathcal{D} \leftarrow \mathcal{D} \cup \{(x_{i},u_{i}^*)\}$
		\EndFor
		\State $ {\theta}_{0} \leftarrow\arg \min_{\theta} \frac{1}{N_{s}}\sum_{i=1}^{N_{s}} \| \pi_{\textup{approx}}(x_{i};\theta) - u^*_{i} \|^2  $
		\algrulehor
		\Ensure $ \pi_{\textup{approx}}(x;{\theta}_{0}) $
	\end{algorithmic}
\end{algorithm}

\section{Motivating example} To better illustrate the idea of augmenting inexact samples, consider the simple pathological example where we want to approximate a one dimensional policy function  $ \pi: \mathbb{R} \rightarrow \mathbb{R}$ with a parametric function $ \pi(x,\theta) $, which is chosen  to be a  $ 7^{th} $ order polynomial. 
The optimal policy $ \pi^*(x) $ that we would like to approximate  is shown in Fig.~\ref{Fig:pathologicalEx} (in blue). 

To approximate the optimal policy with a polynomial, $ N=4 $ data points are queried 
from the expert (shown in red circles). This data set is denoted by $\mathcal{D}^0$. We compute the coefficients of the polynomial to fit $\mathcal{D}^0$ in a least squares sense, and the fitted polynomial $ \pi _{}(x,\theta_0) $ is shown in Fig.~\ref{Fig:pathologicalEx} (left subplot in gray). Although the fitted polynomial matches the queried data points exactly, it clearly does not approximate the policy. %and clearly cannot be used to approximate the optimal policy. 
Note that this is a common issue with over-parameterized functions and too few training data samples, which can easily occur for example when using deep neural networks. This simple pathological example clearly motivates the need for augmenting the data set with more samples in order to improve generalization to the states not queried from the expert.

\begin{figure*}
	\centering
	\includegraphics[width=\linewidth]{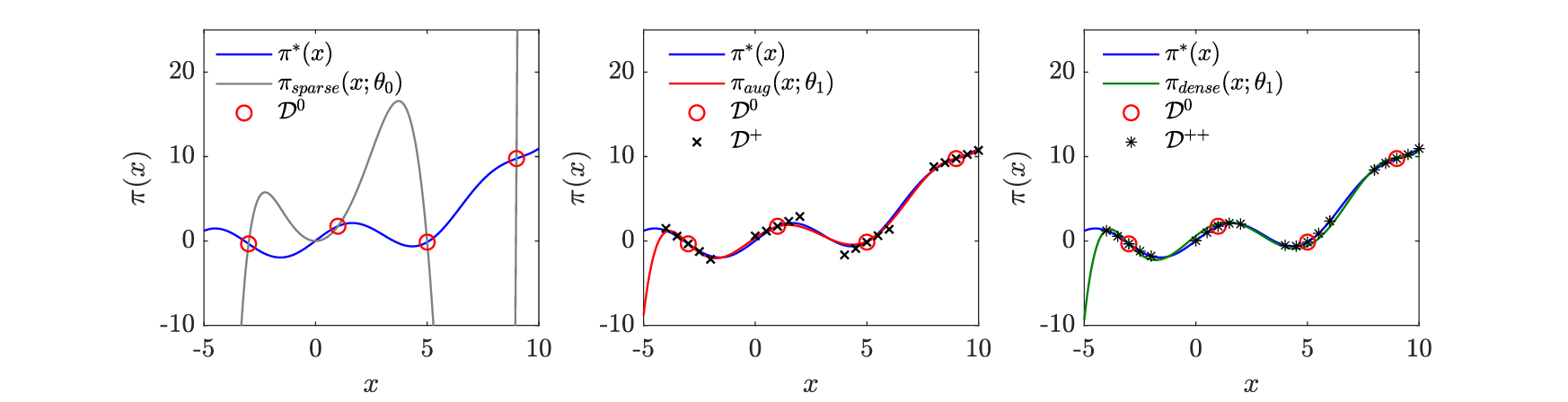}
	\caption{Illustrative example: Comparison of the true policy function (blue) with a 5th order polynomial approximation trained using  sparsely sampled data points (left subplot), proposed augmented data set (middle subplot), and densely sampled data set (right subplot).  }\label{Fig:pathologicalEx}
\end{figure*}
Now, consider the case where around each of these four data points, four more additional  data points are sampled along the tangent (shown in black x in middle subplot). By augmenting the data set $ \mathcal{D}^0 $ with the inexact samples, the fitted polynomial $ \pi_{}(x,\theta_1)$ (shown in red in Fig.~\ref{Fig:pathologicalEx} middle subplot) is now able to approximate the true manifold closely, since the additional data points along the tangent captures the local curvature of the solution manifold. 
This is also compared with the polynomial $ \pi _{}(x,\theta_2) $ fitted using the exact data points sampled by querying the expert at the same points  as the augmented data points (shown in black * and green line in Fig.~\ref{Fig:pathologicalEx} right subplot). This clearly demonstrates that augmenting the data set with additional samples can be beneficial, despite the fact that the augmented  data points may not be exact. %As illustrated in this example, this is because it helps capture the local slope of the solution manifold around each exact sample inferred from the expert query. %we can significantly improve the learning by augmenting the training data set, albeit with inexact data samples. 
Of course, the error due to the inexactness increases as the augmented samples are  further away from the original sample $ x_i $, which will also be studied in this paper.%(that scales by ‖Δx_i ‖^2 as shown in Lemma 1). 

\section{PROPOSED METHOD}\label{sec:Proposed}
As mentioned in the previous section, generating the training data requires solving $ N_{s} $ numerical optimization problems offline, which can be time consuming and computationally expensive. This section leverages the NLP sensitivity to cheaply generate training data samples that can be used to approximate the MPC policy. To keep the notation light, we rewrite the MPC problem \eqref{Eq:MPC} into a standard parametric NLP problem  of the form,
\begin{subequations}\label{Eq:NLP}
\begin{align}
V_{N}(p) =&\min_{\mathbf{w}} \; \;  J(\mathbf{w},p) \label{Eq:NLP:cost}\\
\textup{s.t.}  \;\; & {c}(\mathbf{w},p) = 0 \label{Eq:NLP:system}\\
& g(\mathbf{w},p)\leq 0 \label{Eq:NLP:ineq}
\end{align}
\end{subequations}
where $ p = x(0|t) = x(t) $ is the initial state, the decision variables $ \mathbf{w} := [u(0|t),\dots,u(N-1|t),x(1|t),\dots,x(N|t)]^{\mathsf{T}}  $ the cost \eqref{Eq:MPC:cost} is denoted by \eqref{Eq:NLP:cost}, the system equations \eqref{Eq:MPC:system} are denoted by \eqref{Eq:NLP:system}, the state and input constraints \eqref{Eq:MPC:path}, and the terminal constraint \eqref{Eq:MPC:terminal} are collectively denoted by \eqref{Eq:NLP:ineq}. Since the focus is on solving the MPC problem offline, we drop the time dependency of the initial state, and simply denote the initial condition as $ x $ instead of $ x(t) $. 

The Lagrangian of \eqref{Eq:NLP} is given by
\begin{equation*}
\mathcal{L}(\mathbf{w},p,\lambda,\mu):= J(\mathbf{w},p)  + \lambda^{\mathsf{T}} {c}(\mathbf{w},p) + \mu^{\mathsf{T}}g(\mathbf{w},p)
\end{equation*}
where $ \lambda $ and $ \mu $ are the Lagrangian multipliers of \eqref{Eq:NLP:system} and \eqref{Eq:NLP:ineq} respectively, and the KKT condition for this problem is given by,
\begin{subequations}\label{Eq:KKT}
\begin{align}
\nabla_{\mathbf{w}}\mathcal{L}(\mathbf{w},p,\lambda,\mu) &=0\\
{c}(\mathbf{w},p)  & = 0\\
g(\mathbf{w},p)&\leq 0\\
\mu_{j}g_{j}(\mathbf{w},p)  = 0, \quad \mu_{j} &\geq 0 \quad\forall j
\end{align}
\end{subequations}
Any point $\mathbf{s}^*(p):= [\mathbf{w}^{*},\lambda^{*},\mu^{*}]^{\mathsf{T}}$ that satisfies the KKT conditions \eqref{Eq:KKT} for a given initial condition $ p $ is known  as a KKT point for $ p $. We define the set of  active inequality constraints $ g_{\mathbb{A}}(\mathbf{w},p) \subseteq g(\mathbf{w},p) $ such that $  g_{\mathbb{A}}(\mathbf{w},p)  = 0 $, and  strict complementarity is said to hold if the corresponding Lagrange multipliers $ \mu_{\mathbb{A}} >0 $ (i.e. no weakly active constraint). This set of  KKT conditions can be represented compactly as  \[ \varphi(\mathbf{s}(p),p)  = \begin{bmatrix}
	\nabla_{\mathbf{w}}\mathcal{L}(\mathbf{s},p) \\
	{c}(\mathbf{w},p)  \\
	g_{\mathbb{A}}(\mathbf{w},p)
\end{bmatrix} = 0 \]
\begin{theorem}[\cite{fiacco1976sensitivity}] \label{thm:Sensitivity}
	Let $ J(\cdot,\cdot) $, $ c(\cdot,\cdot) $  and $ g(\cdot,\cdot) $ of the parametric NLP problem $ \mathcal{P}(p) $  be twice continuously differentiable in a neighborhood of the KKT point $ \mathbf{s}^*(p_{0}) $. Further, let linear independence constraint qualification (LICQ), second order sufficient conditions (SOSC) and strict complementarity (SC) hold for the solution vector $ \mathbf{s}^*(p_{0}) $. Then,
	\begin{itemize}
		\item $ \mathbf{s}^*(p_0) $ is a unique local minimizer of $ \mathcal{P}(p_0) $.
		\item For parametric perturbations $ \Delta p $ in the neighborhood of $ p_0 $, there exists a unique, continuous, and differentiable vector function $ \mathbf{s}^*(p_0+\Delta p) $ which is a KKT point satisfying LICQ and SOSC for $ \mathcal{P}(p_0 + \Delta p) $.
		\item There {exist} positive Lipschitz constants $ L_{s} $  and $ L_{V} $ such that the solution vector and the optimal cost \textcolor[rgb]{0,0,1}{satisfy}
		\begin{align}
			\|\mathbf{s}^*(p_0+\Delta p) - \mathbf{s}^*(p_0)\| & \leq L_{s} \|\Delta p\| \label{Eq:Lipschitz} \\
			\|V_{N}(p_0+\Delta p) - V_{N}(p_0)\| &\leq L_{V} \|\Delta p\|   
		\end{align} 
	\end{itemize}
\end{theorem}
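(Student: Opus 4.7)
The plan is to apply the Implicit Function Theorem (IFT) to the KKT system with the active set locally frozen by strict complementarity. This is the standard route for Fiacco-type sensitivity results, and I would break it into three pieces corresponding to the three bullets in the statement.

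First, I would establish that $\mathbf{s}^*(p_0)$ is a strict local minimizer. This is immediate from the classical second-order sufficiency theorem in nonlinear programming: whenever LICQ, the first-order KKT conditions, strict complementarity, and SOSC hold at a primal-dual point, the point is a strict local minimizer, hence unique in a sufficiently small neighborhood. So the first claim is essentially a direct quotation of that result.

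Second, for existence and differentiability of $\mathbf{s}^*(p_0+\Delta p)$, the key observation is that strict complementarity prevents the active set from changing under small perturbations of $p$: active multipliers $\mu_{\mathbb{A}}>0$ stay strictly positive and inactive constraints (indexed by $\mathbb{I}$, the complement of $\mathbb{A}$) stay strictly satisfied. So locally the KKT system collapses to the equality system
\begin{equation*}
\varphi(\mathbf{s},p) := \begin{bmatrix} \nabla_{\mathbf{w}}\mathcal{L}(\mathbf{w},p,\lambda,\mu) \\ c(\mathbf{w},p) \\ g_{\mathbb{A}}(\mathbf{w},p) \\ \mu_{\mathbb{I}} \end{bmatrix} = 0.
\end{equation*}
The main step is then to verify that $\nabla_{\mathbf{s}}\varphi$ is nonsingular at $(\mathbf{s}^*(p_0),p_0)$. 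The relevant block is the saddle-point matrix formed by $\nabla_{\mathbf{w}\mathbf{w}}^2\mathcal{L}$ and the Jacobians of $c$ and $g_{\mathbb{A}}$. Nonsingularity follows from a standard argument: any element of the null space must have a primal component killed by SOSC (positive definiteness of the Lagrangian Hessian on the null space of active-constraint gradients), and then its dual components must vanish by LICQ (linear independence of those gradients). With nonsingularity established, the IFT provides a unique $C^{1}$ mapping $p\mapsto \mathbf{s}^*(p)$ in a neighborhood of $p_{0}$. Because LICQ, SOSC, and strict complementarity are open conditions and all relevant functions are continuous, they continue to hold at $\mathbf{s}^*(p)$ for $p$ near $p_{0}$, so $\mathbf{s}^*(p)$ is indeed a KKT point of $\mathcal{P}(p)$ with the asserted properties.

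Third, the Lipschitz bounds then follow immediately from $C^{1}$ regularity on a compact sub-neighborhood of $p_{0}$: the constant $L_{s}:=\sup_{p}\|\nabla_{p}\mathbf{s}^*(p)\|$ is finite and the mean value inequality yields the first estimate, while $V_{N}(p)=J(\mathbf{w}^*(p),p)$ is a composition of $C^{1}$ maps (equivalently, by the envelope identity $\nabla_{p}V_{N}(p)=\nabla_{p}\mathcal{L}(\mathbf{s}^*(p),p)$) and hence locally Lipschitz with constant $L_{V}$. The main obstacle I would anticipate is verifying nonsingularity of the KKT matrix; this is where LICQ and SOSC do their real work, and everything else is a direct application of the IFT or a routine consequence of continuous differentiability on a compact neighborhood.
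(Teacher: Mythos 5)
Your proposal is correct and follows exactly the classical route: the paper itself gives no argument beyond ``See \cite{fiacco1976sensitivity}'', and the implicit-function-theorem proof you sketch --- freezing the active set via strict complementarity, proving nonsingularity of the KKT matrix by the SOSC/LICQ null-space argument, and reading off the Lipschitz bounds from $C^1$ regularity on a compact neighborhood --- is precisely the argument in that reference. No gaps; the only caveat is inherited from the paper's own statement (it omits the required smoothness of $g$ and writes ``linear inequality'' for ``linear independence'' constraint qualification), which your reconstruction implicitly and correctly repairs.
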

\begin{proof}
	See \cite{fiacco1976sensitivity}
\end{proof}

\begin{algorithm}[t]
	\caption{Generating training samples using sensitivity-based data augmentation and approximating the MPC policy.}
	\label{alg:SensitivityTraining}
	\begin{algorithmic}[1]
		\Require $ \mathcal{P}(x) $, $ \mathcal{X}_{feas} $, $ \mathcal{D} = \emptyset$
		\algrulehor
		\For {$ i = 1,\dots,N_{s} $}
		\State Sample $ x_{i} \in \mathcal{X}_{feas} $
		\State $ \mathbf{s}^*(x_{i}) \leftarrow $ Solve $ \mathcal{P}(x_{i}) $
		\State Extract $ u_{i}^* $ from the primal-dual solution vector $  \mathbf{s}^*(x_{i}) $
		\State $\mathcal{D} \leftarrow \mathcal{D} \cup \{(x_{i},u_{i}^*)\}$
		\For {$ j = 1,\dots,N_{p} $}
		\State Sample $ \Delta x_{j} \in \Delta\mathcal{X}_{i} $ in the neighborhood of $ x_{i}$% : p_0 + \Delta{x}_{j} \in \mathcal{X}$
		\State  $  \hat{\mathbf{s}}^*(x_{i}+\Delta x_{j}) =  \mathbf{s}^*(x_{i})  - \mathcal{M}^{-1}\mathcal{N}\Delta{x}_{j}$
		\State Extract $ \hat{u}_{j}^* $ from the solution vector $  \mathbf{\hat s}^*(x_{i}+ \Delta{x}_{j}) $
		\If {$ {g}_{\mathbb{A}}({\mathbf{w}^*(x_{i})}) == {g}_{\mathbb{A}}({\hat{\mathbf{w}}(x_{i}+ \Delta x_{j})}) $}
		\State $\mathcal{D} \leftarrow \mathcal{D} \cup \{(x_{i}+\Delta x_{j},\hat u_{j}^*)\}$
		\EndIf
		\EndFor
		\EndFor
		\State $ {\theta}_{2} \leftarrow\arg \min_{\theta} \frac{1}{N_{s}N_{p}}\sum_{i=1}^{N_{s}N_{p}} \| \pi_{\textup{approx}}(x_{i};\theta) - u^*_{i} \|^2  $
		\algrulehor
		\Ensure $ \pi_{\textup{approx}}(x;{\theta}_{2}) $
	\end{algorithmic}
\end{algorithm}

Since the implicit function $ \mathbf{s}^*(p) $  satisfies  $ \varphi(\mathbf{s}^*(p),p) = 0$ for any $ p $ in the neighborhood of $ p_{0} $,  the implicit function theorem gives 
\begin{align}\label{Eq:IFT}
	\frac{\partial}{\partial p} \varphi(\mathbf{s}^*(p),p) \bigg|_{p=p_{0}}  = \frac{\partial  \varphi}{\partial \mathbf{s}}  \frac{\partial\mathbf{s}^*}{\partial p} + \frac{\partial \varphi}{\partial p} = 
\mathcal{M} 	\frac{\partial\mathbf{s}^*}{\partial p} + \mathcal{N}= 0
\end{align}
where \[  \mathcal{M}:= \begin{bmatrix}
	\nabla^2_{\mathbf{w}\mathbf{w}}\mathcal{L}({\mathbf{s}^*(p_0)}) &\nabla_{\mathbf{w}}{c}({\mathbf{w}^*(p_0)}) & \nabla_{\mathbf{w}}{g}_{\mathbb{A}}({\mathbf{w}^*(p_0)})\\ 
	\nabla_{\mathbf{w}}{c}({\mathbf{w}^*({p_0})})^{\mathsf{T}} & 0  & 0 \\ 
	\nabla_{\mathbf{w}}{g}_{\mathbb{A}}({\mathbf{w}^*(p_0)})^{\mathsf{T}}& 0 &0%{g}({{w}^*({p}_{0})})
	\end{bmatrix} \] is the KKT matrix, and
	\[ \mathcal{N} :=  \begin{bmatrix}
\nabla^2_{\mathbf{w}{p}}\mathcal{L}({\mathbf{s}^*(p_0)})\\ 
\nabla_{{p}}{c}({\mathbf{w}^*(p_0)})^{\mathsf{T}}  \\ 
\nabla_{{p}}{g}_{\mathbb{A}}({\mathbf{w}^*(p_0)})^{\mathsf{T}}
\end{bmatrix} \]
Linearizing the solution vector $ \mathbf{s}^*(p) $ around $ p_0 $ gives
\begin{align*}
\mathbf{s}^*(p_0+\Delta p) =\mathbf{s}^*(p_0) + \frac{\partial \mathbf{s}^*}{\partial p} \Delta p + \mathcal{O}(\|\Delta p\|^2)
\end{align*}
Ignoring the higher order terms, the solution of the neighboring problems $ p_0+\Delta p $ can be approximated using \eqref{Eq:IFT} as,
\begin{equation}\label{Eq:SensitivityUpdate1}
	\hat{\mathbf{s}}^*(p_{0}+\Delta p)   = \mathbf{s}^*(p_0)  -\mathcal{M}^{-1}\mathcal{N}\Delta p
\end{equation}
where $ \hat{\mathbf{s}}^*(p_{0}+\Delta p)  $ is the approximate primal-dual solution of the optimization problem  $ \mathcal{P}(p_{0}+ \Delta p) $, and $ \Delta \mathbf{s}^* := -\mathcal{M}^{-1}\mathcal{N}\Delta p $ is 
known as the \textit{tangential predictor} or \textit{linear predictor}. Simply put, we linearize the solution manifold $ \mathbf{s}^*(p) $ around $ p_{0} $, and compute the approximate solution at $ p_{0}+ \Delta p $. Note that since SOSC holds, the KKT matrix $ \mathcal{M} $ is invertible. 

Computing $ \Delta \mathbf{s}^* := -\mathcal{M}^{-1}\mathcal{N}\Delta p $ requires only a linear solve, which is significantly cheaper to compute than solving the full NLP.
%\subsection{Sensitivity-based data augmentation}
From this we can see that once the solution to the NLP problem $ \mathcal{P}(x_{i}) $ is available for a given initial state $ x_{i} \in \mathcal{X}_{feas}$, we can exploit the parametric property of the NLP to compute a fast approximate solution for an additional finite set of $ j = 1,\dots,N_{p} $  optimization problems  $ \mathcal{P}(x_{i}+\Delta{x}_{j}) $ with initial states $ x_{i} + \Delta x_{j} \in \mathcal
X_{feas}$ in the neighborhood of $ x_{i} $. More precisely, $ \Delta x_{j} $ is sampled from a subset of arbitrary size $ \Delta \mathcal{X}_{i} \subset \mathcal{X}_{feas}$ such that $ x_{i} \in \text{int}(\Delta\mathcal{X}_{i}) $. Using the tangential predictor \eqref{Eq:SensitivityUpdate1}, the corresponding optimal solution denoted by  $ \hat{u}^*_{j} $ can then be  evaluated, which only requires a linear solve.
By exploiting the sensitivities, one can then generate $ M:=N_{s}N_{p} $ number of training samples  using only  $ N_{s} $ NLP problems solved exactly.  
The pseudo-code for the proposed sensitivity-based data augmentation technique for MPC policy approximation is summarized in Algorithm~\ref{alg:SensitivityTraining}. The idea of exploiting the parametric property of the MPC problem with respect to the initial states  $ p = x(t) $ is also used  in other parts of MPC literature such as  real-time iteration \cite{diehl2005RTI}, advanced-step MPC \cite{zavala2009advanced,jaschke2014fast} and adaptive horizon MPC \cite{griffith2018robustly,DK2020CMU} to name a few.

\begin{remark}[Change in active constraint set]
	Changes in the active set induces non-smooth points in the solution manifold, which cannot be captured by the tangential predictor. 
	If the perturbation $ \Delta x_{j} $ induces a change in the set of active constraints, then  one would have to solve a quadratic programming problem, often known as predictor QP,  in order to produce a piecewise linear prediction manifold that can capture the non-smooth \enquote{corners} in the solution manifold \cite{bonnans1998optimization}. Depending on the problem size and complexity, this may still be computationally cheaper than solving a full NLP problem. A simpler alternative  is to discard any sensitivity updates that induce a change in active constraint set. By doing so, we do not augment the data set $ \mathcal{D} $ with points that induce a change in the active set. This  is what is adopted in this paper (cf. line 10 in Algorithm~\ref{alg:SensitivityTraining}). 
\end{remark}

A natural question that  then arises is, how does augmenting the data set with inexact samples affect the policy approximation. To study this, consider an optimal solution manifold of an NLP denoted by $ \mathbf{s}^*(p) $ that we wish to approximate. Assume $ p  \in  \mathcal{X}_{feas}$ is sampled at $ N_{s} $ discrete points, and the corresponding optimal solution $ \mathbf{s}^*(p_{i}) $ for all $ i= 1,\dots, N_{s} $ is obtained by solving the NLP exactly. Now consider a piecewise linear inexact solution manifold  $ \hat{\mathbf{s}}(p) $ that is given by the tangential predictor \eqref{Eq:SensitivityUpdate1} using  the exact solution at $ p_{i} $ in the $ i^{th} $ linear region for all $ i = 1,\dots,N_{s} $. Let $ \Delta p_{i} $ be  defined around each sample point $ p_{i} $, such that $ \hat{\mathbf{s}}(p) $ exists for all $ p\in \mathcal{X}_{feas} $. The true solution manifold $ \mathbf{s}^*(p) $, and the piecewise linear inexact solution manifold $\hat{ \mathbf{s}}(p) $ generated around the $ N_{s} $ samples is graphically illustrated for a one-dimensional case in Fig.~\ref{Fig:manifold}. The maximum deviation between the true solution manifold $ \mathbf{s}^*(p) $ and the inexact manifold $ \hat{ \mathbf{s}}(p) $ is quantified in the following Lemma. 
\begin{figure}
	\centering
	\includegraphics[width=0.67\linewidth]{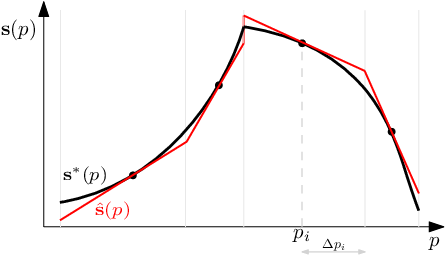}
	\caption{Graphical illustration of a one dimensional solution manifold $ \mathbf{s}^*(p)$ (black), and a piecewise linear inexact manifold $ \hat{\mathbf{s}}(p) $ (red) approximated around  the solutions at $ p_{i} $ (black dot) using a tangential predictor (shown here for  $ N_{s}  = 4 $). }\label{Fig:manifold}
\end{figure}
\begin{lemma}\label{thm:MaxD}
	Given a solution manifold $ \mathbf{s}^*(p) $, and a piecewise linear inexact manifold $ \hat{\mathbf{s}}(p) $  approximated around  $ \mathbf{s}^*(p_{i}) $ using the tangential predictor \eqref{Eq:SensitivityUpdate1} for all $ i = 1,\dots,N_{s} $,  with $  \|\Delta p_{i}\|  $ chosen around each $ p_{i} $ such that  the active constraint set remains the same and $ \exists\, \hat{\mathbf{s}}(p) \;\; \forall p \in \mathcal{X}_{feas}$, then the following holds
	\begin{equation}\label{Eq:ManifoldErrrorBound}
		\|\hat{\mathbf{s}}(p) - \mathbf{s}^*(p)\| \leq \max\left(  \left\{ L_{p_{i}} \|\Delta p_{i}\|^2\right\}_{i=1}^{N_{s}}\right) 
	\end{equation}
for some positive constants $ L_{p_{i}} $
\end{lemma}
\begin{proof}
  From the continuity and  differentiability of $ \mathbf{s}^*(p) $ around each sample point $ \mathbf{s}^*(p_{i}) $ (cf. Theorem~\ref{thm:Sensitivity}), there exists some positive Lipschitz constant $ L_{p_{i}} $ for the $ i^{th} $ piecewise linear region such that,
	\[ 	\|\hat{\mathbf{s}}(p_{i}) - \mathbf{s}^*(p_{i})\| \leq L_{p_{i}} \|\Delta p_{i}\|^2\] 
	for all $ i = 1,\dots,N_{s} $. 
	Aggregating over all the regions, the max-distance between the  true manifold and the inexact  manifold at any point $ p $ is then given by 
	\[ \|\hat{\mathbf{s}}(p) - \mathbf{s}^*(p)\| \leq \max\left(  \left\{ L_{p_{i}} \|\Delta p_{i}\|^2\right\}_{i=1}^{N_{s}}\right)  \]
\end{proof}

We now want to approximate the solution manifold with any suitable parametric function $ \pi(p;\theta) $ using $ M := N_{s}N_{p}$ data samples.
\begin{defn}[Sufficiently rich parametrization]\label{def:RichParametrization}
Given a parametric function $ \pi(x;\theta) $ that is used to approximate a function $ F(x) $ over  some domain $ x \in \mathbb{X} $, the parametric function $ \pi(x;\theta) $ is said to be sufficiently richly parameterized if there exists $ \theta $ in some searchable domain $ \Theta $ such that $ F(x) = \pi(x;\theta) $ for all $ x \in \mathbb{X} $.
\end{defn}
\begin{assume}\label{asm:RichParametrization}
	The functional form of $ \pi(p;\theta) $ has sufficiently rich parametrization and $ \exists \; \theta^* $ such that $ \mathbf{s}^*(p) = \pi(p;\theta^*) $, and $ \exists \; \hat{\theta} $ such that $ \hat{\mathbf{s}}(p) = \pi(p;\hat{\theta}) $.
\end{assume}
 Consider the case where the training data is sampled by solving the NLP exactly at the $ M $ samples, which gives us \begin{equation}\label{Eq:Estimator1}
 	\theta_1 = \arg \min_\theta \sum_{i=1}^{M} \|\pi(p_{i};\theta) - \mathbf{s}^*(p_{i})\|^2 
 \end{equation}
 Now consider the case where the training data is sampled from the inexact manifold $ \hat{\mathbf{s}}(p) $ at the same $ M $ samples, which gives us
 \begin{equation}\label{Eq:Estimator2}
 	\theta_2= \arg \min_\theta \sum_{i=1}^{M} \|\pi(p_{i};\theta) - \hat{\mathbf{s}}(p_{i})\|^2 
 \end{equation}
 The following result then establishes the error bound between the function approximators $ \pi(p;\theta_1) $ and $ \pi(p;\theta_2) $ stemming solely from augmenting the data set with inexact samples. 
\begin{theorem}\label{thm:ErrorBounds}
	Given Assumption~\ref{asm:RichParametrization} for the problem setup as in Lemma~1, if $ \theta_1 $ and $ \theta_2 $ are consistent estimators of \eqref{Eq:Estimator1} and \eqref{Eq:Estimator2} respectively, then 
	\begin{equation}\label{Eq:thm:bounds}
		\|\pi(p;\theta_1) - \pi(p;\theta_2)\| \leq D 
	\end{equation}
in probability as $ M \rightarrow \infty $, where $ D:= \max\left(  \left\{ L_{p_{i}} \|\Delta p_{i}\|^2\right\}_{i=1}^{N_{s}}\right)  $
\end{theorem}
\begin{proof}
	From Assumption~\ref{asm:RichParametrization} and Lemma~\ref{thm:MaxD}, we have that
	\begin{align*}
	\|\hat{\mathbf{s}}(p) - \mathbf{s}^*(p)\| =& \|\pi(p;\theta^*) - \pi(p,\hat{\theta})\|\leq D
	\end{align*}
If $ \theta_1 $ is a consistent estimator of  \eqref{Eq:Estimator1}, then 
\[ \lim_{M\rightarrow\infty}  \mathbb{P}(|\theta_1 - \theta^*|>\epsilon) = 0\]
for any $ \epsilon \geq0 $ where $ \mathbb{P}(\cdot) $ denotes the probability. Similarly, we have 
\[ \lim_{M\rightarrow\infty}  \mathbb{P}(|\theta_2 - \hat{\theta}|>\epsilon) = 0\]
if  $ \theta_2 $ is a consistent estimator of  \eqref{Eq:Estimator2}.
Combining these we get the inequality in \eqref{Eq:thm:bounds} in probability as $ M \rightarrow \infty $.
\end{proof}
%Note that we only consider the limiting case with $ M \rightarrow \infty $ here, since we are mainly interested in quantifying the error due to the inexact samples, and not the  approximation error. 
Note that although the above result considers  $ M\rightarrow \infty $, generating the training samples for \eqref{Eq:Estimator2} only requires solving $ N_{s} $ NLP problems exactly, the remaining $ M-N_{s} $ training samples are given by the tangential predictor \eqref{Eq:SensitivityUpdate1}. 
Since $ \mathbf{s}^*(p) $ is the true solution manifold, solving the MPC policy online would  also give us $ \mathbf{s}^*(p) $. Consequently, the case of $ M  \rightarrow \infty$ in \eqref{Eq:Estimator1} can  be seen as the MPC policy used online, and using the arguments of Theorem~\ref{thm:ErrorBounds} we can state the following corollary that establishes the error bound between the MPC policy and the approximate policy trained using the inexact samples.
\begin{corollary}\label{thm:corollary}
	Given Assumption~\ref{asm:RichParametrization} for the problem setup as in Lemma~\ref{thm:MaxD}, if $ \theta_2 $ is a consistent estimator of \eqref{Eq:Estimator2}, then 
	\begin{equation}\label{Eq:thm:bounds2}
		\|\pi_{\textup{mpc}}(p) - \pi(p;\theta_2)\| \leq D 
	\end{equation}
in probability as $ M \rightarrow \infty $.
\end{corollary}
Theorem~\ref{thm:ErrorBounds}/Corollary~\ref{thm:corollary} thus provides an upper bound on the error solely induced due to augmenting the data set with a  very large number of inexact samples  based on  $ N_{s} $ exact samples.  The following (obvious) result  considers the effect of the number of samples $ N_{s} $ around which the piecewise linear  manifold is generated.
\begin{theorem}\label{thm:NsInfty}
	Given the problem setup as in Lemma~1, and the estimators \eqref{Eq:Estimator1} and \eqref{Eq:Estimator2},
	\begin{equation}\label{Eq:Corollary}
		\lim_{{N_{s}} \rightarrow \infty} \|\pi(p;\theta_1) - \pi(p;\theta_2)\|  = 0
	\end{equation}
\end{theorem}
\begin{proof}
	The proof of this result follows trivially from Lemma~{1} where, as $ N_{s} \rightarrow \infty $, the neighborhood of the tangential predictor gets smaller, i.e. $ \Delta p_{i} \rightarrow 0 \quad \forall i$, and we have
	\[  \lim_{{N_{s}} \rightarrow \infty} \|\hat{\mathbf{s}}(p) - \mathbf{s}^*(p)\|  = 0 \]
	This implies that  \eqref{Eq:Estimator1} and \eqref{Eq:Estimator2} are identical as $ {N_{s}} \rightarrow \infty $, which proves our result.
\end{proof}

To summarize, a large size of $ \Delta p_{i}  $  implies that one can cheaply obtain  several inexact data samples covering a larger subset of the feasible state-space. However, as the size of $ \Delta p_{i}  $ grows,  the approximation error induced by the sensitivity update also increases, as quantified by Theorem~\ref{thm:ErrorBounds}. %This confirms and quantifies that the error due to augmenting the data set with inexact samples depends on the maximum size of the neighborhood $ \Delta p_{i} $ around an NLP sample. 
 As such Theorem~\ref{thm:ErrorBounds} and Theorem~\ref{thm:NsInfty} establish the trade-off between accuracy and  computational cost of generating the training samples. 
 
 \begin{remark}[Sampling]
 	The proposed sensitivity-based data augmentation is not dependent on any particular sampling strategy, and can be used with different sampling strategies as one would have used with approximating the MPC policy without data augmentation. For any given sampling strategy, instead of solving all the sampled states exactly, one can instead use the tangential predictor \eqref{Eq:SensitivityUpdate1} at the samples that are within a user-defined neighborhood of an already existing sample. 
 	
 	\begin{remark}[Setpoint and weights]
 		The proposed sensitivity-based data augmentation is not restricted to parametric NLPs w.r.t. the initial states $ p_{i} = x _{i}$, but can also be utilized  by parameterizing the optimization problem with respect to other parameters such as  reference trajectories $ x^{sp}_{i} $,  MPC tuning parameters  such as weights in the cost function $ \omega_{i} $, or measured disturbances $ d_{i} $ in addition to the initial states $ x_{i}$, i.e. $ p_{i} = [x_{i},x^{sp}_{i},\omega_i,d_{i}]^{\mathsf{T}} $. 
 	\end{remark}

 \begin{remark} [Linear MPC]
 	If $ \ell(\cdot,\cdot) $ is convex quadratic and $ f(\cdot) $ is linear,  then $ \hat{\mathbf{s}}^*(x+\Delta x)   = {\mathbf{s}}^*(x+\Delta x)   $, and consequently $ D = 0 $ in Theorem~\ref{thm:ErrorBounds}. 
 \end{remark}
 	
 	% sobol sampling - stratfied sampling where the sttate space is divided into active sets,
 	
 	% non probability sampling methods - such as convinece sampling - based on availability - for example expert demonstration
 	
 	% control oriented sampling - However there is an exponential decay, so cant be used much... Ns*N + Ns*Np
 \end{remark}

The proposed approach is also not restricted to the MPC formulation \eqref{Eq:MPC}, but can also be used with other variants of MPC formulation that typically involves solving nonlinear programming (NLP) problems, such as robust MPC \cite{paulson2020approximate,hertneck2018learning}, and multistage scenario-based MPC \cite{lucia2018deep} with moving horizon estimation \cite{karg2021approximateMHE} etc. to name a few. 
{It can also be seen from \eqref{Eq:SensitivityUpdate1} that the tangential predictor provides the primal-dual solution. Therefore, the proposed data augmentation scheme can also be used to augment a training data set consisting of the optimal dual variables. This is useful in cases where one would like to learn the dual policy such as  in \cite{zhang2019safe}. 
}

\section{ILLUSTRATIVE EXAMPLES}\label{sec:Example}
\subsection{Benchmark CSTR}
\begin{figure}
	\centering
	\includegraphics[width=0.99\linewidth]{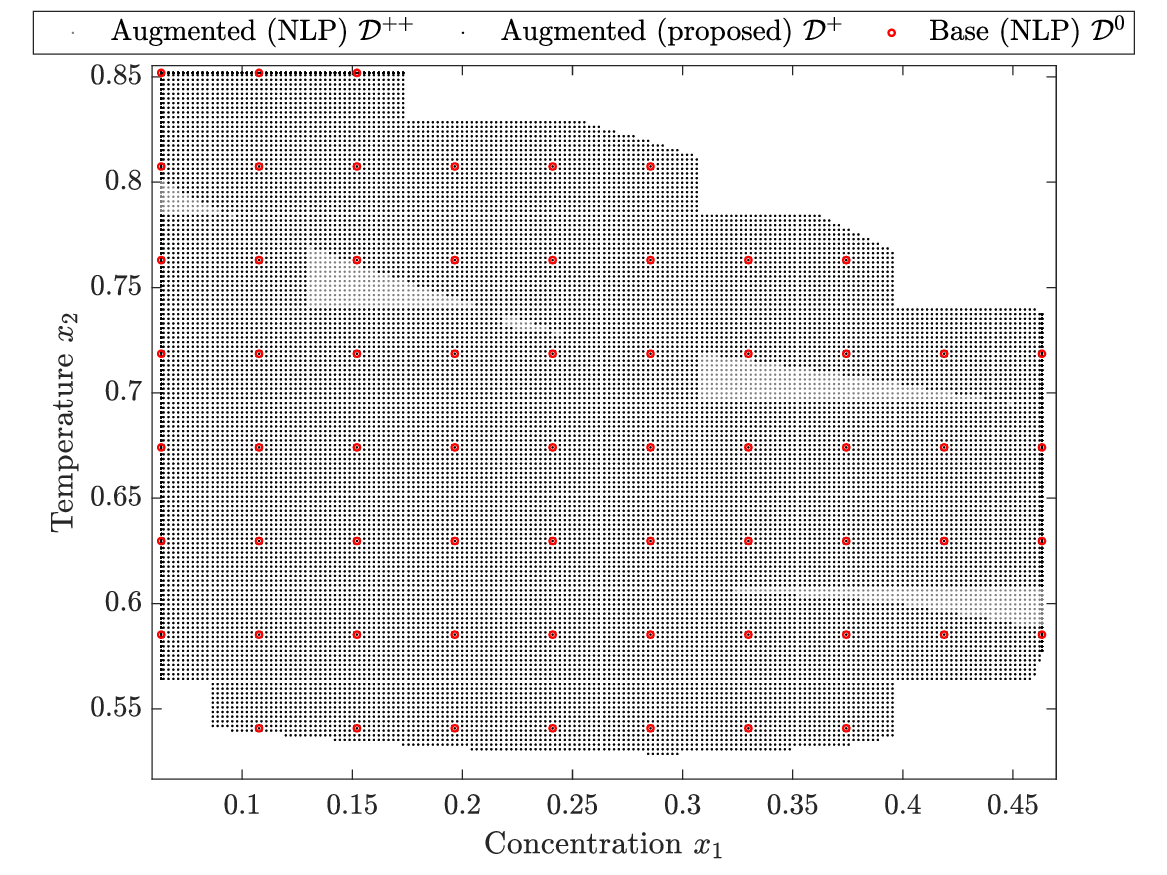}
	\caption{Example A: Grid-based sampling of the feasible state space $ \mathcal{X}_{feas} $. Red circles denote the base samples $ \mathcal{D}^0 $, where the corresponding optimal input is generated by solving the full optimization problem,  the black dots denotes the augmented samples $ \mathcal{D}^+ $ where the corresponding optimal input  is generated using the tangential predictor, and the gray dots denote the samples   in $ \mathcal{D}^{++} $ where the corresponding optimal input  is generated by solving the full NLP. }\label{Fig:Grid}
\end{figure}

We now apply the proposed approach on a benchmark CSTR problem from \cite{mayne2011tube} that was also used in the context of  MPC policy approximation in \cite{hertneck2018learning}. 
This problem consists of two states, namely the {scaled} concentration and reactor temperature (denoted by $ x_{1} $ and $ x_{2} $, respectively). The process is controlled using the coolant flow rate $ u $.  The model  is given by
\begin{align*}
\dot{x}_{1} &= (1/\tau)(1-x_{1}) - k x_{1} e^{-\beta/x_{2}}\\
\dot{x}_{2} &= (1/\tau)(x_{f}-x_{2}) + k x_{1} e^{-\beta/x_{2}} - \alpha u (x_{2} -x_{c})
\end{align*}
and the model parameters are $ \tau = 20 $, $ k=300$, $ \beta=5 $, $ x_{f}=0.3947 $, $ x_{c} = 0.3816 $, and $ \alpha = 0.117 $. Furthermore, we have $ \mathcal{X} = [0.0632,0.4632]\times [0.4519,0.8519] $ and $ \mathcal{U} = [0,2] $. The setpoint is given by $ x^{sp} = [0.2632, 0.6519]^{\mathsf{T}} $.
The stage cost is given by \[  \ell(x,u) = \|x-x^{sp}\|^2 + 10^{-4} \|u\|^2  \] The MPC problem is solved with a sampling time of 3s and a  prediction horizon of $ N=140 $.

One approach to generate the learning samples is to use a grid-based sampling approach as done in \cite{hertneck2018learning}, where the optimal input $ u^* = \pi_{\textup{mpc}}(x) $ is  evaluated at each grid point. In general, a small grid size is preferred since this would improve the MPC policy approximation. However, this would lead to large sample size $ N_{s} $. 
The proposed approach enables us to choose a relatively  larger grid size, where the corresponding optimal input $ \pi_{\textup{mpc}}(x) $ is evaluated by solving the optimization problem. Additional grid points can then be generated with a smaller grid size around each grid point, and the corresponding optimal input can be computed by using the tangential predictor \eqref{Eq:SensitivityUpdate1}. 

In this case, we first generated $ N_{s} = 64$ samples using a sparse grid with an interval of 0.0445 for both $ x_{1} $ and $ x_{2} $. This base data set is denoted by $ \mathcal{D}^0 $, which is shown in red circles in Fig.~\ref{Fig:Grid}. Since this is too few samples to approximate the MPC policy, we then augment the data set with additional samples, where around each grid point, the state space is further sampled with a smaller grid size  of 0.0052 for both $ x_{1} $ and  $ x_{2} $.  For these additional grid points, we obtained the corresponding inexact optimal input by using the tangential predictor \eqref{Eq:SensitivityUpdate1} which is shown in black dots in Fig.~\ref{Fig:Grid}. The additional grid points that induced a change in the active set were simply discarded. By doing so, we were able to cheaply generate and augment 23516 additional data points using only 64 full NLP computations. This data  set is denoted by $ \mathcal{D}^+ $. 
To serve as benchmark, the optimal input at the  additional grid points were also generated  using Algorithm~\ref{alg:Training}, i.e. by solving the full NLP (shown in gray dots in Fig.~\ref{Fig:Grid}).
Since the changes in the active constraints are not an issue when solving the full NLP, this approach generated an additional 24503 data points, which comes at a very high cost of computation. This data set is denoted by $ \mathcal{D}^{++} $. The total number of data points, as well as the average and cumulative CPU time for generating the training data sets $ \mathcal{D}^0 $, $ \mathcal{D}^+ $ and $ \mathcal{D}^{++} $ are summarized in Table~\ref{tb:CPUtime}. This clearly shows the benefit of the proposed approach in terms of the computational cost of generating the training samples.

\begin{table}[t]
	\begin{footnotesize}
		\begin{center}
			\caption{Example A: CPU time in [s] for generating the training samples.}\label{tb:CPUtime}
			\begin{tabular}{l|cc|c}
				\toprule
				& Cumulative  & Average   &total \\ 
				& CPU time & CPU time per  & no. of\\ 
				& [s]& data point [s]&samples \\ \hline
				Base grid (NLP) $ \mathcal{D}^0 $ &  35.766  & 0.5589  &  64\\
				Augmented   (proposed)  $ \mathcal{D}^+ $& 175.77 & 0.0074  & 23516 \\
				Augmented  (NLP) $ \mathcal{D}^{++} $& 12960.4& 0.529 & 24503 \\
				\bottomrule
			\end{tabular}
		\end{center}
	\end{footnotesize}
\end{table}

\begin{figure}
	\centering
	\begin{subfigure}{\linewidth}
		\includegraphics[width=\linewidth]{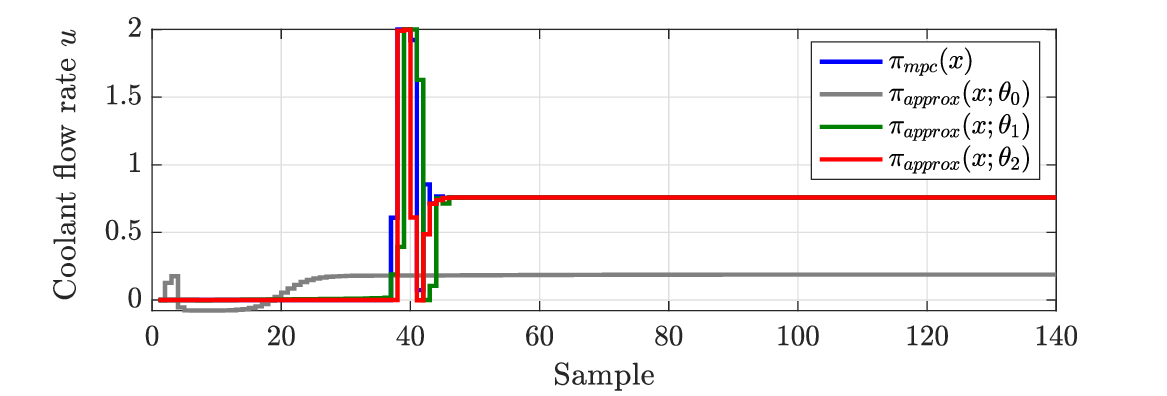}
		\caption{}\label{Fig:CSTRsim}
	\end{subfigure}
	\begin{subfigure}{\linewidth}
		\includegraphics[width=\linewidth]{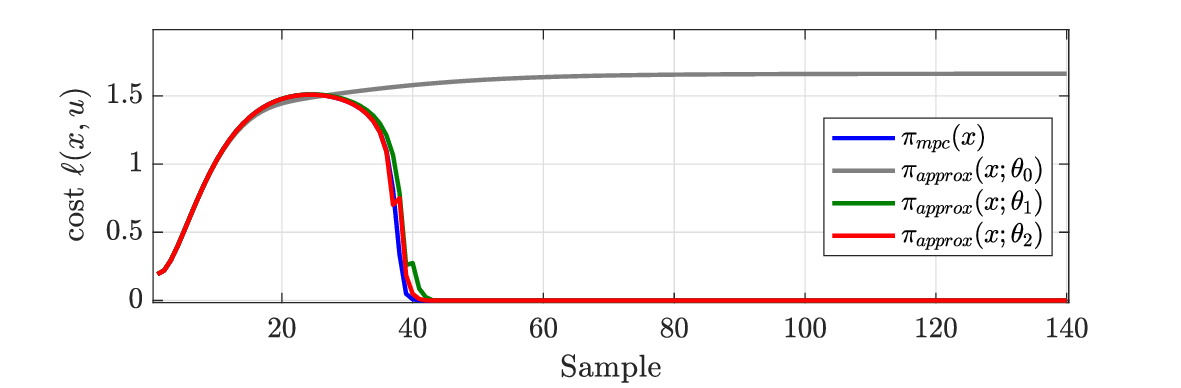}
		\caption{}\label{Fig:CSTRcost}
	\end{subfigure}
	\caption{Example A: Closed-loop simulation results comparing the {(a) control actions and (b) cost $ \ell(x,u) $} given by the 
		MPC  policy $ \pi_{\textup{mpc}}(x) $ (blue) and the approximate  policies $ \pi_{\textup{approx}}(x;\theta_0) $  trained using sparsely sampled data (gray), $ \pi_{\textup{approx}}(x;\theta_1) $  trained using finely sampled data obtained by solving full NLP (green),  and $ \pi_{\textup{approx}}(x;\theta_2) $ trained using inexact samples using the proposed data augmentation approach (red).}
\end{figure}

Using the generated data sets, we approximate the MPC policy using deep neural networks with $  5 $ hidden layers and $10 $ neurons in each hidden layer with the hyperbolic tangent sigmoid  as the activation function in each neuron. Note that the type of the function approximator, its hyperparameters, and the specifics of the MSE minimization problem are not the focus of this paper, and one may find an alternative/better network architecture than the one used here, for example by using Bayesian optimization \cite{snoek2012practical}. The NLP problems  $ \mathcal{P}(x) $ were solved using \texttt{IPOPT} \cite{wachter2006ipopt} with \texttt{MUMPS} linear solver.  The neural network was trained using  \texttt{fitnet} in \texttt{MATLAB v2020b}.  All computations were performed on a 2.6 GHz processor with 16GB memory. Source codes for the simulation results presented in this technical note can be found in the GitHub repository \texttt{https://github.com/dinesh-krishnamoorthy/Sensit
	ivity-DataAugmentation}.  

First we approximate the MPC policy using only the sparsely sampled data set $ \mathcal{D}^0 $, which gives the control policy $ \pi_{\textup{approx}}(x;\theta_0) $. We then approximate the MPC policy using the computationally expensive full data set  $ \mathcal{D}_{1} :=  \mathcal{D}^0 \cup \mathcal{D}^{++} $, which gives the approximate policy $ \pi_{\textup{approx}}(x;\theta_1) $.   We then    approximate the MPC policy using the proposed sensitivity-based augmented data set $ \mathcal{D}_{2} :=  \mathcal{D}^0 \cup \mathcal{D}^+ $, which gives the approximate policy $ \pi_{\textup{approx}}(x;\theta_2) $. 

{Fig.~\ref{Fig:CSTRsim} compares the closed-loop control actions provided by the different policies. The control trajectory provided by the MPC policy $ \pi_{\textup{mpc}}(x) $ (shown in blue) serves as the ideal benchmark. The closed-loop control trajectory using the approximate policy  $ \pi_{\textup{approx}}(x;\theta_0) $ trained using the sparsely sampled data set  $ \mathcal{D}^0 $ is shown in gray, where it can be seen that $ \mathcal{D}^0 $ fails to approximate the MPC policy due to insufficient training data samples.  
The closed-loop control trajectory using the approximate policy  $ \pi_{\textup{approx}}(x;\theta_1) $ trained using the full NLP data set $ \mathcal{D}_{1} $ is shown in green, which is able to approximate the MPC policy well as one would expect.  Finally, the closed-loop control trajectory using the approximate policy $ \pi_{\textup{approx}}(x;\theta_2) $ trained using the proposed augmented data set $ \mathcal{D}_{2}$ is shown in red, which is also able to approximate the MPC policy closely, although the approximate policy $ \pi_{\textup{approx}}(x;\theta_2) $ is trained using inexact samples, that are significantly cheaper to generate than the full NLP (cf. Table~\ref{tb:CPUtime}). }

{The corresponding closed-loop cost $ \ell(x,u) $ is  also shown in Fig.~\ref{Fig:CSTRcost}. Here it can be clearly seen that the  performance using the approximate policy $ \pi_{\textup{approx}}(x;\theta_2) $  trained using the proposed data augmentation scheme is almost identical to the performance of the MPC policy, and the approximate policy  $ \pi_{\textup{approx}}(x;\theta_1) $ trained using the full NLP. }

{Next, we compare the closed-loop trajectories  when starting from different initial conditions. The state trajectories using the MPC policy $ \pi_{\textup{mpc}}(x) $~(blue) and the approximate  policies $ \pi_{\textup{approx}}(x;\theta_0) $~(gray), $ \pi_{\textup{approx}}(x;\theta_1) $~(green dotted lines), and $ \pi_{\textup{approx}}(x;\theta_2) $ ~(red dashed lines) are shown in Fig.~\ref{Fig:CSTRtraj}. Here it can be clearly seen that the state trajectories  obtained using the proposed data augmentation scheme  (red dashed lines) is almost identical to the MPC policy (blue) and the approximate policy trained using the full NLP (green dotted lines) for all the different initial conditions, whereas the trajectories obtained by using the approximate policy trained using  sparsely sampled data (gray) deviates significantly from the other trajectories. 
}
\begin{figure}
	\centering
	\includegraphics[width=\linewidth]{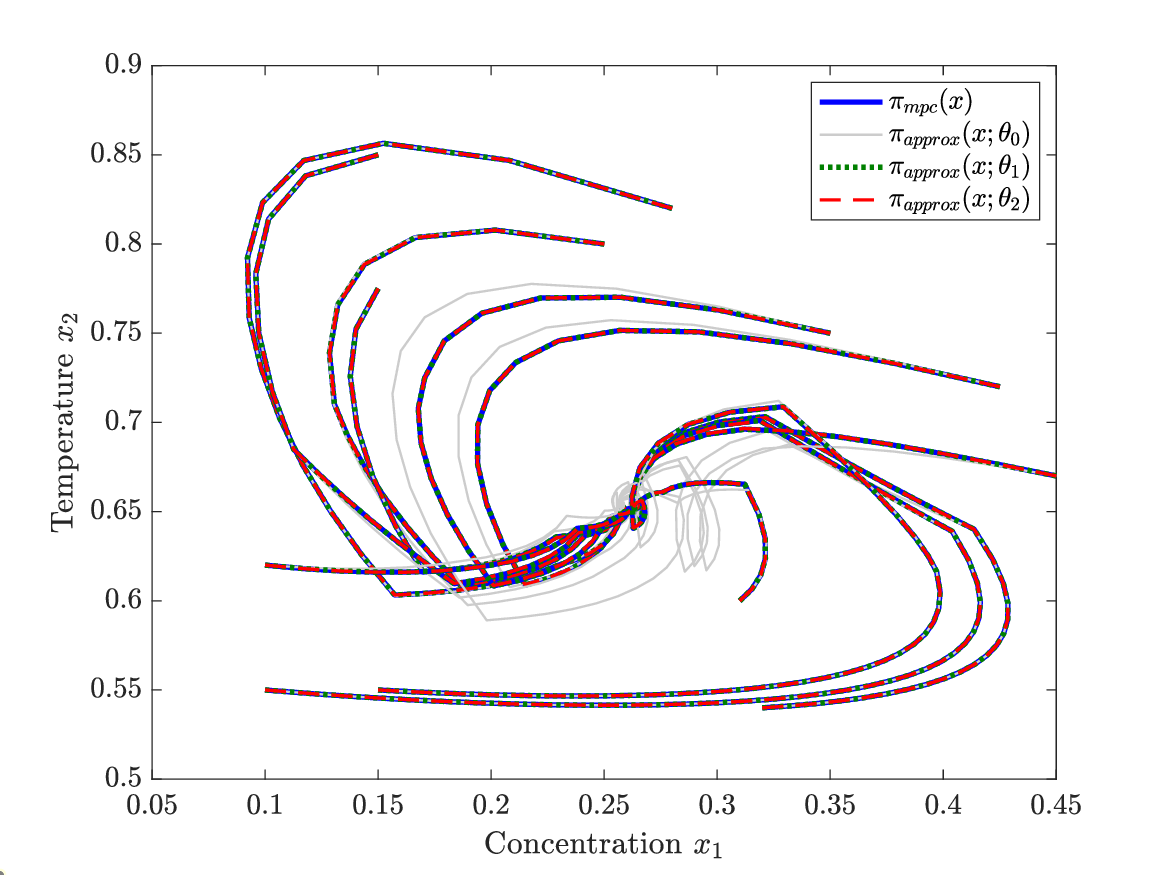}
	\caption{{Example A: State trajectories starting from different initial conditions  given by the 
			MPC  policy $ \pi_{\textup{mpc}}(x) $ (blue) and the approximate  policies $ \pi_{\textup{approx}}(x;\theta_0) $  trained using sparsely sampled data (gray), $ \pi_{\textup{approx}}(x;\theta_1) $  trained using finely sampled data obtained by solving full NLP (green dotted),  and $ \pi_{\textup{approx}}(x;\theta_2) $ trained using inexact samples using the proposed data augmentation approach (red dashed).}}\label{Fig:CSTRtraj}
\end{figure}

\begin{figure*}
	\centering
	\includegraphics[width=\linewidth]{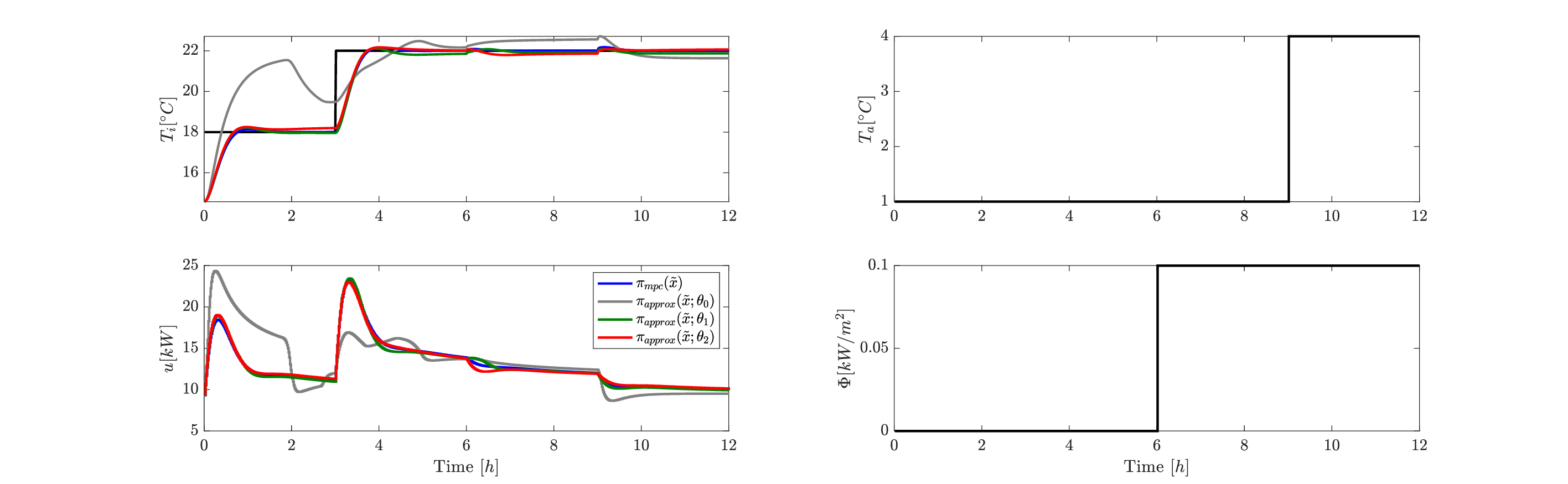}
	\caption{Example B: Closed-loop simulation results comparing performance of the 
		MPC  policy $ \pi_{\textup{mpc}}(\tilde x) $ (blue) and the approximate  policies $ \pi_{\textup{approx}}(\tilde{x};\theta_0) $  trained using sparsely sampled data (gray), $ \pi_{\textup{approx}}(\tilde{x};\theta_1) $  trained using finely sampled data obtained by solving full NLP (green),  and $ \pi_{\textup{approx}}(\tilde{x};\theta_2) $ trained using inexact samples using the proposed data augmentation approach (red).}\label{Fig:BCCsim}
\end{figure*}

This clearly demonstrates that by using the proposed data augmentation approach,  we can sparsely sample $ \mathcal{X}_{feas} $, which  reduces the number of optimization problems that needs to be solved offline, and augment the data set with several additional inexact samples that can be obtained cheaply. Consequently, the overall time and computational cost required to generate the training samples is significantly  lesser, and at the same time, the approximate policy trained using the augmented samples provides similar performance as the MPC policy.

\addtolength{\textheight}{-0cm}

\subsection{Building Climate Control}
We now illustrate the proposed approach on a building climate control problem, for which there have been several works considering MPC as the control strategy, see \cite{drgovna2018approximate,privara2011role} and the references therein. 
In our simulations, we model the heat dynamics of a building based on the modeling framework from  \cite{bacher2011identifying}, as shown below,
%\begin{subequations}\label{Eq:BCC}
		\begin{align*}
\frac{\textup{d} T_{s}}{\textup{d}t} & = \frac{1}{R_{is}C_{s}}(T_{i} - T_{s})\\
\frac{\textup{d} T_{i}}{\textup{d}t} & = \frac{1}{R_{is}C_{i}}(T_{s} - T_{i}) +\frac{1}{R_{ih}C_{i}} (T_{h}-T_{i}) + \frac{A_{w}\Phi}{C_{i}}  \\
& \qquad +\frac{1}{R_{ie}C_{i}} (T_{e}-T_{i}) + \frac{1}{R_{ia}C_{i}} (T_{a}-T_{i}) 
\end{align*}
\begin{align*}
\frac{\textup{d} T_{h}}{\textup{d}t} & = \frac{1}{R_{ih}C_{h}}(T_{i} - T_{h}) + \frac{u}{C_{h}}\\
\frac{\textup{d} T_{e}}{\textup{d}t} & = \frac{1}{R_{ie}C_{e}}(T_{i} - T_{e}) + \frac{1}{R_{ea}C_{e}}(T_{a} - T_{e}) + \frac{A_{e}\Phi}{C_{e}}
\end{align*}
%\end{subequations}
where the subscripts $ (\cdot)_s,(\cdot)_i,(\cdot)_h, (\cdot)_e $ and $ (\cdot)_{a} $ denotes the sensor, building interior, heater, building envelop, and ambient, respectively. $ T $ denotes the temperature, $ R $ denotes the thermal resistance, $ C $ denotes the heat capacity and $ u $ denotes the heat flux. The solar irradiation $ \Phi $ enters the building interior through the effective window area $ A_{w} $ in addition to heating the building envelop with effective area $ A_{e} $. The states are given by $ x = [T_{s},T_{i},T_{h},T_{e}]^{\mathsf{T}}  $ with $ T_{(\cdot)} \in [12,40] $ $ ^\circ $C. The ambient temperature $ T_{a} \in [-5,20] $ $ ^\circ $C and the solar irradiation $ \Phi \in [0,0.2] $ kW/m$ ^2 $ are measured disturbances. The parameter values used in the model are taken from \cite{bacher2011identifying}. %shown in Table~\ref{tb:BCCparam}. 
The objective is to drive the interior temperature $ T_{i} $ to a desired setpoint $ T_{i}^{sp }\in [18,25]$ $ ^\circ $C , while penalizing the rate of change of the  input usage $ u \in [0,40] $ kW. The stage cost is  given by   \[ \ell(x,u) = (T_{i} - T_{i}^{sp})^2 + 0.1(\Delta u)^2   \] 
The MPC problem is formulated with a sampling time of 1 min and a prediction horizon of $ N= 3$ hours. The goal is to approximate the MPC policy $ \pi_{\textup{mpc}}(\tilde{x}) $.
In this example $ \tilde{x} = [T_{s},T_{i},T_{h},T_{e},T_{i}^{sp},T_{a},\Phi,u]^{\mathsf{T}}$, which requires sampling a 8-dimensional space in order to generate the training samples.   {This example demonstrates the proposed approach when we have a higher-dimensional augmented state-space (cf. Remark~3).}

We randomly generate a total of 6930 samples, out of which, 330 samples were generated by solving the optimization problem  (denoted by  $ \mathcal{D}^0 $) and 6600 samples were generated using the proposed sensitivity update  as shown in Algorithm~\ref{alg:SensitivityTraining} (denoted by  $ \mathcal{D}^+ $). As a benchmark, the same 6600 samples were also generated by solving the full NLP as shown in Algorithm~\ref{alg:Training} (denoted by $ \mathcal{D}^{++} $).

 Using the generated training samples, we approximate the MPC policy using deep neural networks with $ 3  $ hidden layers and $ 10 $ neurons in each hidden layer, with rectified linear units (ReLU) as the activation function in each neuron. We first approximate the policy using only $ \mathcal{D}^0 $. This is denoted by $ \pi_{\textup{approx}}(\tilde{x};\theta_0) $. The policy trained on the  full data set $ \mathcal{D}_{1}:= \mathcal{D}^0 \cup \mathcal{D}^{++}$ is denoted by $ \pi_{\textup{approx}}(\tilde{x};\theta_1) $. Similarly, the policy trained on the proposed sensitivity-based augmented  data set $ \mathcal{D}_{2}:= \mathcal{D}^0 \cup \mathcal{D}^{+}$ is denoted by $ \pi_{\textup{approx}}(\tilde{x};\theta_2) $.

We test the performance of the  approximate MPC policy  for a total simulation time of 12 hours, with changes in the setpoint (at $ t=3 $ h), solar irradiation  (at time $ t=6 $ h), and ambient temperature  ($ t=9 $ h).   Fig.~\ref{Fig:BCCsim} shows the closed loop simulation results using the traditional MPC control law $ \pi_{\textup{mpc}}(\tilde x) $~(blue) obtained by solving the MPC problem online, and the performance of the approximate policies $ \pi_{\textup{approx}}(\tilde{x};\theta_0) $~(gray) , $ \pi_{\textup{approx}}(\tilde{x};\theta_1) $~(green), and $ \pi_{\textup{approx}}(\tilde{x};\theta_2) $~(red). We see $ \pi_{\textup{approx}}(\tilde{x};\theta_0) $ does not approximate the MPC policy accurately, due to too few training samples. On the other hand, $ \pi_{\textup{approx}}(\tilde{x};\theta_1) $  and $ \pi_{\textup{approx}}(\tilde{x};\theta_2) $ mimics the MPC policy closely. However obtaining the training data set for $  \pi_{\textup{approx}}(\tilde{x};\theta_1)  $ is significantly more costly than the proposed  sensitivity-based data augmentation scheme  used to train $  \pi_{\textup{approx}}(\tilde{x};\theta_2)  $.
From this it can be seen that the proposed sensitivity-based data augmentation framework can be used to parameterize the measured disturbances, setpoints, and the control input in addition to the states in order to handle time varying disturbances and setpoints, and approximate the MPC policy closely by augmenting the data set with inexact samples.

\section{CONCLUSIONS} \label{sec:Conclude}
To conclude, this technical note addresses an important implementation aspect of MPC policy approximation, namely the cost of training.  Algorithm~\ref{alg:SensitivityTraining} exploits the parametric sensitivities   to augment  several training samples using the solution of a single optimization problem. 
It was shown that by  using the proposed approach, one can
\begin{itemize}
	\item sample  the  feasible state space sparsely, hence reducing the number of optimization problems that needs to be solved offline,
	\item  and augment the data set with additional samples using a tangential predictor.
\end{itemize}
The error due to augmenting the data set with inexact samples was also quantified, and it was shown to depend quadratically on the  max distance between the augmented sample and the original sample $ \|\Delta x_{i}\|^2 $. 
More broadly, the proposed data augmentation scheme can be used in any policy approximation  where the training data comprises of optimal state-action pairs that is sampled from a policy given by  a nonlinear programming problem. As such, this paper is a first step towards a data augmentation framework for approximate optimal control problems.

% This command serves to balance the column lengths
% on the last page of the document manually. It shortens
% the textheight of the last page by a suitable amount.
% This command does not take effect until the next page
% so it should come on the page before the last. Make
% sure that you do not shorten the textheight too much.

%%%%%%%%%%%%%%%%%%%%%%%%%%%%%%%%%%%%%%%%%%%%%%%%%%%%%%%%%%%%%%%%%%%%%%%%%%%%%%%%

%%%%%%%%%%%%%%%%%%%%%%%%%%%%%%%%%%%%%%%%%%%%%%%%%%%%%%%%%%%%%%%%%%%%%%%%%%%%%%%%

%%%%%%%%%%%%%%%%%%%%%%%%%%%%%%%%%%%%%%%%%%%%%%%%%%%%%%%%%%%%%%%%%%%%%%%%%%%%%%%%
%\section*{APPENDIX}

\section*{ACKNOWLEDGMENT}
Helpful discussions with Prof. Sebastien Gros from the Department of Engineering Cybernetics at the Norwegian University of Science and Technology is gratefully acknowledged. 
\bibliographystyle{IEEEtran}
\bibliography{ScenOpt}

% Generated by IEEEtran.bst, version: 1.14 (2015/08/26)
\begin{thebibliography}{10}
\providecommand{\url}[1]{#1}
\csname url@samestyle\endcsname
\providecommand{\newblock}{\relax}
\providecommand{\bibinfo}[2]{#2}
\providecommand{\BIBentrySTDinterwordspacing}{\spaceskip=0pt\relax}
\providecommand{\BIBentryALTinterwordstretchfactor}{4}
\providecommand{\BIBentryALTinterwordspacing}{\spaceskip=\fontdimen2\font plus
\BIBentryALTinterwordstretchfactor\fontdimen3\font minus
  \fontdimen4\font\relax}
\providecommand{\BIBforeignlanguage}[2]{{%
\expandafter\ifx\csname l@#1\endcsname\relax
\typeout{** WARNING: IEEEtran.bst: No hyphenation pattern has been}%
\typeout{** loaded for the language `#1'. Using the pattern for}%
\typeout{** the default language instead.}%
\else
\language=\csname l@#1\endcsname
\fi
#2}}
\providecommand{\BIBdecl}{\relax}
\BIBdecl

\bibitem{bemporad2002explicit}
A.~Bemporad, M.~Morari, V.~Dua, and E.~N. Pistikopoulos, ``The explicit linear
  quadratic regulator for constrained systems,'' \emph{Automatica}, vol.~38,
  no.~1, pp. 3--20, 2002.

\bibitem{tondel2003algorithm}
P.~T{\o}ndel, T.~A. Johansen, and A.~Bemporad, ``An algorithm for
  multi-parametric quadratic programming and explicit {MPC} solutions,''
  \emph{Automatica}, vol.~39, no.~3, pp. 489--497, 2003.

\bibitem{parisini1995receding}
T.~Parisini and R.~Zoppoli, ``A receding-horizon regulator for nonlinear
  systems and a neural approximation,'' \emph{Automatica}, vol.~31, no.~10, pp.
  1443--1451, 1995.

\bibitem{chakrabarty2017support}
A.~Chakrabarty, V.~Dinh, M.~J. Corless, A.~E. Rundell, S.~H. {\.Z}ak, and G.~T.
  Buzzard, ``Support vector machine informed explicit nonlinear model
  predictive control using low-discrepancy sequences,'' \emph{IEEE Transactions
  on Automatic Control}, vol.~62, no.~1, pp. 135--148, 2017.

\bibitem{karg2018efficient}
B.~Karg and S.~Lucia, ``Efficient representation and approximation of model
  predictive control laws via deep learning,'' \emph{IEEE Transactions on
  Cybernetics}, 2020.

\bibitem{chen2018approximating}
S.~Chen, K.~Saulnier, N.~Atanasov, D.~D. Lee, V.~Kumar, G.~J. Pappas, and
  M.~Morari, ``Approximating explicit model predictive control using
  constrained neural networks,'' in \emph{2018 Annual American control
  conference (ACC)}.\hskip 1em plus 0.5em minus 0.4em\relax IEEE, 2018, pp.
  1520--1527.

\bibitem{csekHo2015explicit}
L.~H. Csek{\H{o}}, M.~Kvasnica, and B.~Lantos, ``Explicit {MPC}-based {RBF}
  neural network controller design with discrete-time actual kalman filter for
  semiactive suspension,'' \emph{IEEE Transactions on Control Systems
  Technology}, vol.~23, no.~5, pp. 1736--1753, 2015.

\bibitem{paulson2020approximate}
J.~A. Paulson and A.~Mesbah, ``Approximate closed-loop robust model predictive
  control with guaranteed stability and constraint satisfaction,'' \emph{IEEE
  Control Systems Letters}, 2020.

\bibitem{hertneck2018learning}
M.~Hertneck, J.~K{\"o}hler, S.~Trimpe, and F.~Allg{\"o}wer, ``Learning an
  approximate model predictive controller with guarantees,'' \emph{IEEE Control
  Systems Letters}, vol.~2, no.~3, pp. 543--548, 2018.

\bibitem{drgovna2018approximate}
J.~Drgo{\v{n}}a, D.~Picard, M.~Kvasnica, and L.~Helsen, ``Approximate model
  predictive building control via machine learning,'' \emph{Applied Energy},
  vol. 218, pp. 199--216, 2018.

\bibitem{zhang2019safe}
X.~Zhang, M.~Bujarbaruah, and F.~Borrelli, ``Safe and near-optimal policy
  learning for model predictive control using primal-dual neural networks,'' in
  \emph{2019 American Control Conference (ACC)}.\hskip 1em plus 0.5em minus
  0.4em\relax IEEE, 2019, pp. 354--359.

\bibitem{bertsekas2019reinforcement}
D.~P. Bertsekas, \emph{Reinforcement learning and optimal control}.\hskip 1em
  plus 0.5em minus 0.4em\relax Athena Scientific Belmont, MA, 2019.

\bibitem{esmaili1995behavioural}
N.~Esmaili, C.~Sammut, and G.~Shirazi, ``Behavioural cloning in control of a
  dynamic system,'' in \emph{1995 IEEE International Conference on Systems, Man
  and Cybernetics. Intelligent Systems for the 21st Century}, vol.~3.\hskip 1em
  plus 0.5em minus 0.4em\relax IEEE, 1995, pp. 2904--2909.

\bibitem{osa2018algorithmicperspectiveImitationLearning}
\BIBentryALTinterwordspacing
T.~Osa, J.~Pajarinen, G.~Neumann, J.~A. Bagnell, P.~Abbeel, and J.~Peters, ``An
  algorithmic perspective on imitation learning,'' \emph{Foundations and
  Trends® in Robotics}, vol.~7, no. 1-2, pp. 1--179, 2018. [Online].
  Available: \url{http://dx.doi.org/10.1561/2300000053}
\BIBentrySTDinterwordspacing

\bibitem{novak2020supervised}
M.~Novak and T.~Dragicevic, ``Supervised imitation learning of finite-set model
  predictive control systems for power electronics,'' \emph{IEEE Transactions
  on Industrial Electronics}, vol.~68, no.~2, pp. 1717--1723, 2020.

\bibitem{tran2017bayesian}
T.~Tran, T.~Pham, G.~Carneiro, L.~Palmer, and I.~Reid, ``A bayesian data
  augmentation approach for learning deep models,'' in \emph{Advances in neural
  information processing systems}, 2017, pp. 2797--2806.

\bibitem{taylor2018improving}
L.~Taylor and G.~Nitschke, ``Improving deep learning with generic data
  augmentation,'' in \emph{2018 IEEE Symposium Series on Computational
  Intelligence (SSCI)}.\hskip 1em plus 0.5em minus 0.4em\relax IEEE, 2018, pp.
  1542--1547.

\bibitem{shorten2019survey}
C.~Shorten and T.~M. Khoshgoftaar, ``A survey on image data augmentation for
  deep learning,'' \emph{Journal of Big Data}, vol.~6, no.~1, pp. 1--48, 2019.

\bibitem{kalman1964InverseOCP}
\BIBentryALTinterwordspacing
R.~E. Kalman, ``{When Is a Linear Control System Optimal?}'' \emph{Journal of
  Basic Engineering}, vol.~86, no.~1, pp. 51--60, 03 1964. [Online]. Available:
  \url{https://doi.org/10.1115/1.3653115}
\BIBentrySTDinterwordspacing

\bibitem{keshavarz2011imputing}
A.~Keshavarz, Y.~Wang, and S.~Boyd, ``Imputing a convex objective function,''
  in \emph{2011 IEEE international symposium on intelligent control}.\hskip 1em
  plus 0.5em minus 0.4em\relax IEEE, 2011, pp. 613--619.

\bibitem{karg2021approximateMHE}
B.~Karg and S.~Lucia, ``Approximate moving horizon estimation and robust
  nonlinear model predictive control via deep learning,'' \emph{Computers \&
  Chemical Engineering}, p. 107266, 2021.

\bibitem{DK2019FOPAM}
D.~Krishnamoorthy and S.~Skogestad, ``Real-time optimization strategies using
  surrogate optimizers,'' in \emph{Proceedings of the 2019 Foundations in
  Process Analytics and Machine Learning}, 2019.

\bibitem{fiacco1976sensitivity}
A.~V. Fiacco, ``Sensitivity analysis for nonlinear programming using penalty
  methods,'' \emph{Mathematical programming}, vol.~10, no.~1, pp. 287--311,
  1976.

\bibitem{diehl2005RTI}
M.~Diehl, H.~G. Bock, and J.~P. Schl{\"o}der, ``A real-time iteration scheme
  for nonlinear optimization in optimal feedback control,'' \emph{SIAM Journal
  on control and optimization}, vol.~43, no.~5, pp. 1714--1736, 2005.

\bibitem{zavala2009advanced}
V.~M. Zavala and L.~T. Biegler, ``The advanced-step nmpc controller:
  Optimality, stability and robustness,'' \emph{Automatica}, vol.~45, no.~1,
  pp. 86--93, 2009.

\bibitem{jaschke2014fast}
J.~J{\"a}schke, X.~Yang, and L.~T. Biegler, ``Fast economic model predictive
  control based on nlp-sensitivities,'' \emph{Journal of Process Control},
  vol.~24, no.~8, pp. 1260--1272, 2014.

\bibitem{griffith2018robustly}
D.~W. Griffith, L.~T. Biegler, and S.~C. Patwardhan, ``Robustly stable adaptive
  horizon nonlinear model predictive control,'' \emph{Journal of Process
  Control}, vol.~70, pp. 109--122, 2018.

\bibitem{DK2020CMU}
D.~Krishnamoorthy, L.~T. Biegler, and J.~J{\"a}schke, ``Adaptive horizon
  economic nonlinear model predictive control,'' \emph{Journal of Process
  Control}, vol.~92, pp. 108--118, 2020.

\bibitem{bonnans1998optimization}
J.~F. Bonnans and A.~Shapiro, ``Optimization problems with perturbations: A
  guided tour,'' \emph{SIAM review}, vol.~40, no.~2, pp. 228--264, 1998.

\bibitem{lucia2018deep}
S.~Lucia and B.~Karg, ``A deep learning-based approach to robust nonlinear
  model predictive control,'' \emph{IFAC-PapersOnLine}, vol.~51, no.~20, pp.
  511--516, 2018.

\bibitem{mayne2011tube}
D.~Q. Mayne, E.~C. Kerrigan, E.~Van~Wyk, and P.~Falugi, ``Tube-based robust
  nonlinear model predictive control,'' \emph{International Journal of Robust
  and Nonlinear Control}, vol.~21, no.~11, pp. 1341--1353, 2011.

\bibitem{snoek2012practical}
J.~Snoek, H.~Larochelle, and R.~P. Adams, ``Practical bayesian optimization of
  machine learning algorithms,'' \emph{Advances in neural information
  processing systems}, vol.~25, 2012.

\bibitem{wachter2006ipopt}
A.~W{\"a}chter and L.~T. Biegler, ``On the implementation of an interior-point
  filter line-search algorithm for large-scale nonlinear programming,''
  \emph{Mathematical programming}, vol. 106, no.~1, pp. 25--57, 2006.

\bibitem{privara2011role}
S.~Pr{\'\i}vara, Z.~V{\'a}{\v{n}}a, J.~Cigler, F.~Oldewurtel, and
  J.~Kom{\'a}rek, ``Role of {MPC} in building climate control,'' in
  \emph{Computer Aided Chemical Engineering}.\hskip 1em plus 0.5em minus
  0.4em\relax Elsevier, 2011, vol.~29, pp. 728--732.

\bibitem{bacher2011identifying}
P.~Bacher and H.~Madsen, ``Identifying suitable models for the heat dynamics of
  buildings,'' \emph{Energy and Buildings}, vol.~43, no.~7, pp. 1511--1522,
  2011.

\end{thebibliography}

%%%%%%%%%%%%%%%%%%%%%%%%%%%%%%%%%%%%%%%%%%%%%%%%%%%%%%%%%%%%%%%%%%%%%%%%%%%%%%%%

\end{document}